\newcommand{\E}{\mathrm E}
\newcommand{\V}{\mathrm V}
\newcommand{\R}{\mathbb R}
\newcommand{\C}{\mathcal C}
\renewcommand{\d}{\partial}
\newcommand{\Hh}{\mathrm {H}_0}
\newcommand{\te}{\mathrm {\theta}}
\newcommand{\Uu}{\EuScript U}
\newcommand{\s}{\mathfrak s}
\newcommand{\M}{\widetilde{M}}
\newcommand{\diam}{\operatorname{diam}}
\renewcommand{\int}{\operatorname{Int}}
\newcommand{\pr}{\operatorname{pr}}
\newcommand{\id}{\operatorname{id}}
\newcommand{\wt}{\widetilde}
\renewcommand{\phi}{\varphi}
\def\lasdim{\mathop{\rm {\ell\hbox{-}asdim}}\nolimits}
\def\asdim{\mathop{\rm {asdim}}\nolimits}
\theoremstyle{plain}
\newtheorem{theorem}{Theorem}
\newtheorem{corollary}{Corollary}
\newtheorem{lemma}{Lemma}
\newtheorem{proposition}{Proposition}
\theoremstyle{definition}
\newtheorem*{definition}{Definition}
\theoremstyle{remark}
\newtheorem{remark}{Remark}
 \newtheoremstyle{break}
   {9pt}
   {9pt}
   {\itshape}
   {}
   {\bfseries}
   {.}
   {\newline}
   {}
 \theoremstyle{break}
\begin{document}

\title{Quasi-isometric embedding of the fundamental group of an orthogonal graph-manifold into 
a product 
of metric trees}
\author{Alexander Smirnov\footnote{Supported by RFFI Grant
11-01-00302-a}}
\date{}
\maketitle

\begin{abstract}
In every dimension 
$n\ge 3$ 
we introduce a class of orthogonal graph-manifolds and prove 
that the fundamental group 
of any orthogonal graph-manifold quasi-isometrically 
embeds into 
a product of 
$n$ 
trees. 
As a consequence, we obtain 
that asymptotic and linearly-controlled asymptotic dimensions of such group are equal to 
$n$. 
\end{abstract}

\section{Introduction}
We introduce a class 
$\mathcal O$ 
of orthogonally glued higher-dimensional graph-manifolds 
(that we call throughout this paper {\em orthogonal graph-manifolds}; see section~\ref{main:dfn} for the definition).
Using the ideas of the paper~\cite{HuSi}, we generalize the results of that paper to the case of the class 
$\mathcal O$.

\begin{theorem}\label{main:thm}
For every $n$-dimensional orthogonal graph-manifold 
its fundamental group supplied with an arbitrary word metric  
admits a quasi-isometric embedding into 
a product of 
$n$ 
metric trees. 
As a consequence, asymptotic and linearly-controlled asymptotic dimensions of such group are equal to 
$n$.
\end{theorem}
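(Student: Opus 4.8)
The plan is to extend the strategy of \cite{HuSi} from dimension three to the higher-dimensional orthogonal setting. Since an orthogonal graph-manifold $M$ is compact, the Milnor--\v Svarc lemma identifies $\pi_1(M)$ with any word metric, up to quasi-isometry, with the universal cover $\M$ equipped with a $\pi_1$-invariant geodesic metric; as ``admits a quasi-isometric embedding into a product of $n$ trees'' is a quasi-isometry invariant, it suffices to quasi-isometrically embed $\M$ into a product of $n$ metric trees. Recall that $\M$ is a tree of metric spaces over the Bass--Serre tree $T$ of the graph-of-spaces decomposition of $M$: its vertex spaces are the universal covers $\M_v\cong B_v\times\R^{k_v}$ of the pieces, with $B_v\cong\Hyp^{d_v}$ the universal cover of the hyperbolic base and $d_v+k_v=n$ for every $v$; its edge spaces are the lifts of the gluing hypersurfaces, codimension-one ``walls'' which are Euclidean $(n-1)$-spaces sitting as coordinate flats inside the adjacent vertex spaces.

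The space $\Hyp^m$ quasi-isometrically embeds into a product of $m$ metric trees, so each vertex space $B_v\times\R^{k_v}$ quasi-isometrically embeds into a product of $d_v$ ``hyperbolic'' trees and $k_v$ ``Euclidean'' trees (a line being a tree), i.e.\ into a product of $n$ trees. The orthogonality hypothesis on the gluings is exactly what makes these local embeddings mutually compatible: when two pieces are glued along a wall $W$, orthogonality forces $W$ to be a coordinate flat for both product structures and identifies their coordinate directions one-to-one, and propagating this over $T$ produces a coloring by $\{1,\dots,n\}$ of all the tree factors occurring in the local embeddings that is preserved by the gluings (a hyperbolic factor on one side may receive the same color as a Euclidean one on the other --- this does no harm), so that the local embeddings of adjacent pieces can be arranged to agree along their common wall after identifying tree factors of equal color. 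For each color $i$ I would then amalgamate, over $T$, the $i$-colored tree factors of all pieces along the images of the walls; because $T$ has no cycles, each amalgam $T_i$ is again a metric tree, and the local embeddings glue to a well-defined, coarsely Lipschitz projection $\pi_i\colon\M\to T_i$.

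The core of the proof is the lower distance estimate: for all $x,y\in\M$ one must show $d_{\M}(x,y)\lesssim\sum_{i=1}^{n}d_{T_i}(\pi_i x,\pi_i y)+C$, the reverse inequality being immediate from the coarse Lipschitz property of the $\pi_i$. I would fix a geodesic $\gamma$ from $x$ to $y$, cut it at the walls it crosses into consecutive arcs each lying in a single vertex space, and estimate arc by arc. The length contributed by the itinerary of crossed walls is recorded by the Bass--Serre directions of the trees $T_i$; inside each piece the length of the corresponding arc is controlled, through that piece's own quasi-isometric embedding, by the spread of the arc's endpoints over the $n$ tree factors; and --- this is the second use of orthogonality --- the flat displacements accumulated in distinct pieces get recorded in the \emph{same} global tree factors without cancellation, so that summing over the arcs and over the $T_i$ recovers $d_{\M}(x,y)$ up to multiplicative and additive constants. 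Dealing with arcs that run inside walls, and with the basepoints used to normalize the $\pi_i$, is technical in the same manner as in \cite{HuSi}.

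I expect the main obstacle to be exactly this coherent global construction of the trees $T_i$: turning the slogan ``orthogonality matches up coordinate directions'' into a precise statement under which the amalgams are genuine trees and the assembled projections are coarsely Lipschitz with constants uniform in the pieces. The within-piece estimates and the final comparison of metrics, though laborious, are then the dimension-$n$ bookkeeping version of \cite{HuSi}. Once the quasi-isometric embedding $\M\hookrightarrow T_1\times\cdots\times T_n$ is established, the dimension statement is formal: both $\asdim$ and $\lasdim$ are monotone under quasi-isometric embeddings and subadditive under finite direct products, and a metric tree has both these dimensions equal to $1$, whence $\asdim\pi_1(M)\le\lasdim\pi_1(M)\le n$; the reverse inequality $\asdim\pi_1(M)\ge n$ follows as in \cite{HuSi}, e.g.\ from the presence of a coarsely embedded vertex space $\Hyp^{d_v}\times\R^{k_v}$, whose asymptotic dimension equals $d_v+k_v=n$. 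Hence all four quantities are equal to $n$.
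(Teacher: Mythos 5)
Your high-level plan (Milnor--\v Svarc reduction, treat $\M$ as a tree of spaces over the Bass--Serre tree, construct global trees by amalgamating local tree factors along walls, and prove the lower distance bound by cutting a geodesic at the walls) is exactly the framework of the paper, and the final formal deduction of the dimension upper bound is also the same. However, there are several concrete errors that go beyond ``technical in the same manner.''

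First, you misidentify the vertex spaces. In this paper an orthogonal graph-manifold always has two-dimensional base $\Phi_v$ (a compact, orientable surface with nonempty boundary) and fiber $T^{n-2}$, so a vertex space of $\M$ is $\widetilde\Phi_v\times\R^{n-2}$ with $\widetilde\Phi_v$ a convex subset of $\mathbb{H}^2$ with geodesic boundary, quasi-isometric to a single metric tree (free $\pi_1$), not to all of $\mathbb{H}^2$ and certainly not to $\mathbb{H}^{d_v}$ for varying $d_v$. This changes the arithmetic. The paper gets $1 + (n-2) = n-1$ tree factors per block (one from the retraction of the $\theta$-tree base onto $T_v$, plus $n-2$ lines), and then uses the Bass--Serre tree $T_0$ itself as the $n$-th factor. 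Your per-block count of $n$ tree factors (by embedding $\mathbb{H}^{d_v}$ into $d_v$ trees) is both unnecessary and leaves no room for $T_0$, which the paper's lower bound genuinely needs: in Lemma~\ref{lem:curves} the increment $|\phi_0(x)\phi_0(x_2)|=1$ in $T_0$ is used to absorb the $2\delta$ error introduced by the retraction $r_v$ at each block crossing, so without $T_0$ the additive errors accumulate without control. Relatedly, amalgamating ``hyperbolic'' tree factors coming from an $\mathbb{H}^2\to T\times T$ embedding would require making the two horospherical directions coherent across pieces, a problem the paper sidesteps entirely by retracting the fattened-tree base onto $T_v$.

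Second, your lower bound for asymptotic dimension is wrong as stated: the coarsely embedded vertex space $\widetilde\Phi_v\times\R^{n-2}$ has $\asdim = 1+(n-2)=n-1$, not $n$, precisely because the base is a tree, not $\mathbb{H}^2$. The paper instead uses that $\M$ is an $n$-dimensional Hadamard manifold and cites~\cite[Theorem~10.1.1]{BuySchr} to get $\asdim\M\ge n$.

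Third, the ``coloring by $\{1,\dots,n\}$'' you appeal to is exactly the part of the paper that requires real work. The paper constructs an equivalence relation on $\V(T_0)$ in which $u\sim v$ iff $\sigma_{uv}$ fixes the first coordinate, proves (using irreducibility) that there are exactly $n-1$ classes, and builds, for each class $c$, a tree $T_c$ by amalgamating $T_v$ when $v\in c$ and lines when $v\notin c$, verifying via Lemmas~\ref{lem:equiv} and~\ref{lem:isom} that the amalgam is well-defined and that the embeddings $T_{v,c}\hookrightarrow T_c$ are isometric. Your sketch does not address what happens when the same ``coordinate direction'' is realized as the surface direction in one block and as a circle direction in another (which is exactly the generic case under orthogonal gluings with $\sigma_w(x_0)\neq x_0$), nor does it justify why the amalgamated space is Hausdorff or geodesic; these are the issues Lemma~\ref{lem:isom} settles.

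\end{document}
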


In the paper~\cite{HuSi} this result was obtained in the 3-dimensional case for every graph-manifold in the sence 
of the definition in section~\ref{subsect:graph_manifolds}.
In fact, according to the paper~\cite{KL}, the fundamental group 
of any 3-dimensional graph-manifold is quasi-isometric to the fundamental group of some flip-manifold, 
which is precisely an orthogonal graph-manifold in the dimension 3. 
Also note that the inequality $\asdim \pi_1(M)\leq n$ for the fundamental group of 
an orthogonal graph-manifold $M$ follows from the result obtained in the Bell~--~Dranishnikov~\cite{BD}.

\section{Preliminaries}
\label{sect:definitions}

\subsection{Graph-manifolds}
\label{subsect:graph_manifolds}

\begin{definition}
A higher-dimensional graph-manifold is a closed, orientable, $n$-dimensional,
$n\geq 3$, 
manifold
$M$ 
that is glued from a finite number of blocks 
$M_v$, 
$M = \bigcup_{v\in V}M_v$. 
These should satisfy the following conditions (1)--(3).
\begin{itemize}
 \item[(1)] Each block $M_v$ is a trivial 
$T^{n-2}$-bundle
over a compact, orientable surface $\Phi_v$ 
with boundary (the surface should be different from the disk and the annulus),
where
$T^{n-2}$
is a
$(n-2)$-dimensional torus; 

 \item[(2)] the manifold  
$M$ is glued from blocks 
$M_v$, $v \in V$, 
by diffeomorphisms between boundary components  
(the case of gluing boundary components of the same block is not excluded);

 \item[(3)] gluing diffeomorphisms do not identify the homotopy
classes of the fiber tori.

\end{itemize}
\end{definition}

For brevity, we use the term ``graph-manifold''
instead of the term ``higher-dimensional graph-manifold''. 

Let
$G$
be a graph dual to the decomposition of
$M$ 
into blocks. The set of blocks of the graph-manifold coincides with the vertex set   
$\V=\V(G)$ 
of the graph  
$G$.
The set of (non-oriented) edges
$\E =\E(G)$
of
$G$
consists of pairs of glued components of blocks. We denote the set 
of the oriented edges of $G$ by $W$.

For more information about the graph-manifolds see~\cite{BK}.

\subsection{Orthogonal graph-manifolds}
\label{subsect:ort_graph_manifolds}

In this section we define a class of graph-manifolds that admit an orthogonally glued
metric of a special form. 
For brevity, we will call them orthogonal graph-manifolds.

Fix a graph $G$ and for each vertex $v \in \V(G)$ 
consider a surface $\Phi_v$
of nonnegative Euler characteristic with $|\d_v|$ boundary components, 
where $\d_v$ is the set of all edges
adjacent to the vertex
$v$.
Moreover, we assume that there is a bijection between the set of boundary components of 
the surface $\Phi_v$ and the set of all oriented edges adjacent to $v$.
For the block $M_v$ corresponding to a vertex $v$ we fix a trivialization 
$M_v=\Phi_v\times S^1\times\dots\times S^1$,
where
$\Phi_v$
is the  base surface, i.e. we fix simultaneously a trivialization
$M_v=\Phi_v\times T^{n-2}$
of
$M_v$
and a trivialization
$T^{n-2}=S^1\times\dots\times S^1$
of the fiber torus. For each block
$M_v$, 
we fix a coordinate system
$(x, x_1, \ldots, x_ {n-2})$
compatible with this decomposition, where
$x\in \Phi_v$ and $x_i\in [0,\,1)$ for each $1\leq i \leq n-2$.
For each oriented edge $w$ adjacent to the vertex $v$,
we define the coordinate system 
$(x_0)$, $x_0\in [0,\,1)$ 
on the corresponding component of the boundary  
$\partial \Phi_v$ 
of the surface  
$\Phi_v$.  
It defines the coordinate system $(x_0,\ldots,x_{n-2})$ 
on the boundary torus $T_ {w}$ of the block $M_v$.
Similarly, for the edge $-w$ inverse to the edge $w$ 
on the boundary torus $T_{-w}$,  
we define
the coordinate system
$(x'_0,\ldots,x'_{n-2})$. 
For each oriented edge $w$, we consider a permutation 
$\s_w$ of a 
well-ordered $(n-1)$-element set 
$(x_0,\dots,x_{n-2})$ such that $\s_w(x_0)\neq x_0$.
Furthermore, we assume
that for mutually inverse edges $w$ and $-w$ the permutations
$\s_ {w}$ and $\s_ {-w}$ are inverse ($\s_{-w}\circ \s_{w}=\id$). 
We define the gluing map $\eta_w\colon T_w\to T_{-w}$ by
$\eta_w((x_0,\ldots,x_{n-2}))=(\s_w(x_0),\ldots,\s_w(x_{n-2}))$. 
Note that this map is a well-defined gluing,  
as permutations $\s_w$ and $\s_{-w}$
are selected to be mutually inverse. 
Also, the map $\eta_w$ does not identify the homotopy
classes of fiber tori.

\begin{definition}\label{main:dfn}
The above described graph-manifold is called {\em an orthogonal
graph-manifold}.
\end{definition}

\begin{remark}
As mentioned above, in the case $n=3$ the class of all orthogonal graph-manifolds coincides 
with the class of all flip graph-manifolds considered in~\cite{KL}.
\end{remark}

\subsection{Metric trees}
\label{subsect:mtrees}

A {\em tripod} in a geodesic metric space $X$ is a union of three geodesic segments
$xt\cup yt\cup zt$ which have only one common point $t$. 
A geodesic metric space $X$ is called {\em a metric tree} if each
triangle in it is a tripod (possibly degenerate).

\subsection{Finitely generated groups}\label{subsect:fggroups}

Let $G$ be a finitely generated group and $S \subset G$ a finite symmetric
generating set for $G$ ($S^{-1} = S$). Recall that {\em a word metric} on the group 
$G$ (with respect to $S$) is the left-invariant metric defined by the norm 
$\|\cdot\|_S$, 
where for each
$g\in G$ its norm $\|g\|_S$ is the smallest number of elements of $S$ whose product
is $g$. It is known that all such metrics for the group $G$ are bi-Lipschitz 
equivalent (see~\cite{BBI}). In this paper we will consider only finitely generated 
groups with a word metric.

\subsection{Quasi-isometric maps}\label{subsect:qimaps}

A map $f\colon X\to Y$ is said to be {\em quasi-isometric} if there exist
$\lambda\geq 1,C\geq 0$ such that
$$\frac{1}{\lambda}|xy|-C\leq |f(x)f(y)|\leq \lambda |xy|+ C$$
for each $x,y\in X$. Metric spaces $X$ and $Y$ are called {\em quasi-isomeric} if there
is a quasi-isometric map $f\colon X\to Y$ such that $f(X)$ is a net in $Y$. 
In this case, $f$ is called {\em a quasi-isometry}.

\subsection{The metric on the universal cover}

Let us recall the famous Milnor--\v{S}varc Lemma.

\begin{lemma}\label{lem:qmetric}
Let $Y$ be a compact length space and let $X$ be the universal cover of $X$  
considered with the metric lifted from $Y$.
Then $X$ is quasi-isometric to the fundamental group $\pi_1(Y)$ 
of the space $Y$ considered with an arbitrary word metric.
\end{lemma}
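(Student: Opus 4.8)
The plan is to run the classical \v{S}varc--Milnor argument for the action of $\Gamma:=\pi_1(Y)$ on $X=\widetilde Y$ by deck transformations. Since $Y$ is a compact length space it is complete and locally compact, hence proper and geodesic by the Hopf--Rinow theorem; the lifted length metric then makes $X$ a complete, locally compact length space, so $X$ is again proper and geodesic, and the deck action of $\Gamma$ on $X$ is free, properly discontinuous, cocompact, and by isometries. Fix a basepoint $x_0\in X$ and let $p\colon X\to Y$ denote the covering projection. For an arbitrary $x\in X$, lifting a path in $Y$ from $p(x)$ to $p(x_0)$ of length at most $\diam Y$ yields a path in $X$ of the same length from $x$ to some point of the orbit $\Gamma x_0$; hence $\Gamma x_0$ is $R$-dense in $X$, where $R:=\diam Y$.

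Next I would extract a finite generating set together with a first distance estimate. Put
$$S:=\{\,g\in\Gamma\setminus\{e\}\ :\ d(x_0,gx_0)\le 2R+1\,\};$$
then $S=S^{-1}$, and $S$ is finite since $X$ is proper and the $\Gamma$-action is properly discontinuous. To see that $S$ generates $\Gamma$, fix $g\in\Gamma$, set $L:=d(x_0,gx_0)$, choose a geodesic from $x_0$ to $gx_0$, and subdivide it by points $x_0=p_0,p_1,\dots,p_k=gx_0$ with $k\le L+1$ and $d(p_{i-1},p_i)\le 1$ for every $i$. For each $i$ pick $g_i\in\Gamma$ with $d(p_i,g_ix_0)\le R$, choosing in addition $g_0=e$ and $g_k=g$. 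Then $d(g_{i-1}x_0,g_ix_0)\le R+1+R=2R+1$, so $d(x_0,g_{i-1}^{-1}g_ix_0)\le 2R+1$ because $\Gamma$ acts by isometries; that is, $g_{i-1}^{-1}g_i\in S\cup\{e\}$. The telescoping identity $g=\prod_{i=1}^k(g_{i-1}^{-1}g_i)$ now exhibits $g$ as a product of at most $k$ elements of $S$, so $S$ generates $\Gamma$ and
$$\|g\|_S\le k\le L+1=d(x_0,gx_0)+1.$$

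For the reverse estimate, set $D:=\max_{s\in S}d(x_0,sx_0)$. If $g=s_1\cdots s_m$ with $m=\|g\|_S$ and each $s_i\in S$, then, using again that $\Gamma$ acts by isometries,
$$d(x_0,gx_0)\ \le\ \sum_{i=1}^m d\bigl(s_1\cdots s_{i-1}x_0,\ s_1\cdots s_ix_0\bigr)\ =\ \sum_{i=1}^m d(x_0,s_ix_0)\ \le\ D\,\|g\|_S.$$
Combining the two estimates, and using left-invariance of the word metric together with $d(gx_0,hx_0)=d(x_0,g^{-1}hx_0)$, the orbit map $f\colon(\Gamma,\|\cdot\|_S)\to X$, $f(g):=gx_0$, satisfies
$$\|g^{-1}h\|_S-1\ \le\ d\bigl(f(g),f(h)\bigr)\ \le\ D\,\|g^{-1}h\|_S\qquad(g,h\in\Gamma),$$
so $f$ is a quasi-isometric embedding. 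Since $f(\Gamma)=\Gamma x_0$ is $R$-dense in $X$, it is a net, and therefore $f$ is a quasi-isometry; thus $X$ is quasi-isometric to $\pi_1(Y)$. Finally, as any two word metrics on a fixed finitely generated group are bi-Lipschitz equivalent, hence quasi-isometric, the conclusion is independent of the chosen word metric.

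I expect the only delicate point to be the geometric input at the outset: that the lifted metric turns the universal cover $X$ into a proper geodesic space carrying a free, cocompact, properly discontinuous isometric $\Gamma$-action, which is where compactness of $Y$ (both for cocompactness and for the bound $R=\diam Y$) and the Hopf--Rinow theorem are used. Everything after that is the routine subdivision-and-telescoping bookkeeping carried out above.
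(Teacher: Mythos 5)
The paper does not actually prove Lemma~\ref{lem:qmetric}: it simply states it as the ``famous Milnor--\v{S}varc Lemma'' and moves on, so there is no proof in the paper to compare against. Your blind proof is a correct and complete rendition of the standard \v{S}varc--Milnor argument: the Hopf--Rinow step establishing that $X$ is proper and geodesic, the $R$-density of the orbit $\Gamma x_0$, the extraction of the finite generating set $S$ via a properly discontinuous action, the subdivision-and-telescoping estimate $\|g\|_S\le d(x_0,gx_0)+1$, the reverse estimate $d(x_0,gx_0)\le D\|g\|_S$, and finally the observation that any two word metrics are bi-Lipschitz equivalent so the statement holds for an arbitrary word metric. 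Each step is justified; I see no gap.
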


It follows from this lemma that to prove Theorem~\ref{main:thm} it is sufficient 
to construct a quasi-isometric embedding of the universal cover into 
a product of $n$ trees.

\subsubsection{Metrics of non-positive curvature}
\label{subsubsect:mnpc}

Define a metric on the orthogonal graph-manifold
$M$ as follows:
for each edge
$e \in \E(G)$
take a flat metric
on its corresponding torus
$T_e$
such that any base circle of the coordinate system described above 
has length 1, and any two  of these circles are perpendicular. 

In particular, for each vertex
$v \in \V(G)$
there is a metric
on the boundary surface
$\Phi_v$
in which every boundary component has length 1.
This metric can be extended to a metric of nonpositive curvature
on the surface
$\Phi_v$
so that its boundary is geodesic.
Therefore, the metric from the boundary tori extends to the metric on the block
$M_v$, which is locally a product metric
(in general, a metric on the block may not be a product metric and it can 
have nontrivial holonomy along some loops on the base $\Phi_v$). 

Further we consider only those metrics on orthogonal graph-manifolds.
If we lift the above metric in the universal cover
$\M$, it follows from the Reshetnyak gluing theorem (see~\cite{BBI}) that 
the obtained metric space is nonpositively curved (or Hadamard) space. 

We fix an orthogonal graph-manifold $M$ with the metric described above.

\subsubsection{The standard hyperbolic surface with boundary
$\Hh$}
\label{subsubsect:standard_base}

Consider the hyperbolic plane $\mathbb{H}^2_{\kappa}$ 
having a curvature 
$-\kappa$ $(\kappa>0)$ 
such that the side of a rectangular equilateral
hexagon 
$\theta$ 
in the plane 
$\mathbb{H}^2_{\kappa}$
has length 1. Let $\rho$ be the distance between the
middle points of sides, which have a common adjacent side, $\delta$ the diameter
of $\te$. 
We mark each second side of $\theta$ (so we have marked three sides) and
consider a set $\Hh$ defined as follows. 
Take the subgroup $G_\theta$ of the isometry
group of 
$\mathbb{H}^2_{\kappa}$
generated by reflections in (three) marked sides of $\theta$ and let
$\Hh$ be the orbit of $\theta$ with respect to $G_\theta$. 
Then $\Hh$ is a convex subset in $\mathbb{H}^2_{\kappa}$
divided into hexagons that are isometric to $\theta$. 
Furthermore, the boundary
of $\Hh$ has infinitely many connected components each of which is a geodesic
$\mathbb{H}^2_{\kappa}$.
The graph $T_{bin}$ dual to the decomposition of $\Hh$ into hexagons is the
standard binary tree whose vertices all have degree three. Any metric space
isometric to $\Hh$ will be called a $\theta$-tree. 
Given a vertex $p$ of $T_{bin}$, we denote by
$\theta_p$ the respective hexagon in $\Hh$.

\begin{remark} 
In what follows,  we will consider $T_{bin}$ as the metric space with
a metric such that the length of each edge is equal to $2\rho$. Then these metric
spaces are metric trees. We will denote the set of vertices in $T_{bin}$ by $\V(T_{bin})$.
\end{remark}

\subsubsection{Standard metrics and bi-Lipschitz homeomorphisms between
bases}\label{subsubsect:standardblocks}

Consider a simplicial tree with the degree 3 of each vertex, 
and the length 1 of each edge.
We replace each edge by the rectangle $1\times 1/3^{100}$ and 
each vertex by the equilateral Euclidean triangle with each side equal 
to $1/3^{100}$. 
Then we glue them in a natural way.

\begin{definition}
The obtained metric space is called {\em a fattened tree} or {\em a standard surface}
and denoted by $X_0$.
\end{definition}

\begin{remark}
Note that the standard surface $X_0$ is bi-Lipschitz homeomorphic
to the $\theta$-tree $\Hh$.
So we fix an arbitrary bi-Lipschitz homeomorphism $h_0\colon \Hh\to X_0$.
\end{remark}

\begin{definition}
{\em The standard block} is defined to be a metric product of the  
$\theta$-tree 
$\Hh$ 
and $n-2$ copies of the Euclidean line $\R$, 
$B=\Hh\times\R\times\ldots\times\R$. 
\end{definition}

The partition of the $\theta$-tree by hexagons induces a partition of each boundary component
of the $\theta$-tree by unit segments.
For each boundary component such a partition is called {\em a grid} on this component. 
For each Euclidean factor $\R$ of the standard block, an arbitrary partition by unit segments 
is called {\em a grid} on this factor.
Finally for each boundary hyperplane $\sigma$ of the standard block 
a partition by unit cubes induced by grids on each factor is called {\em a grid} on this hyperplane.

Recall the following theorem.

\begin{theorem}[\cite{BN}, Theorem~1.2]\label{bnlemma}
Let $X_0$ be as above with a chosen boundary component $\d_0 X_0$. 
Then there exists $K>0$ and a function $\psi\colon \R \to \R$ 
such that for any $K_0>0$ and any $K_0$-bi-Lipschitz
homeomorphism $P_0$ from $\partial_0 X_0$ to a boundary component $\partial_1 X_0$, 
$P_0$ extends to a $\psi(K_0)$-bi-Lipschitz homeomorphism $P\colon X_0 \to X_0$ which
is $K$-bi-Lipschitz on every other boundary component.
\end{theorem}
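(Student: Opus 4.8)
The plan is to descend to the combinatorial skeleton of $X_0$, namely the trivalent tree $T$ of which $X_0$ is a fattening (edges fattened to ribs $1\times 1/3^{100}$, vertices to small triangles). Fixing $\partial_0 X_0$ amounts to fixing the bi-infinite geodesic $\ell_0$ of $T$ along which $\partial_0 X_0$ runs; at each vertex of $\ell_0$ exactly one edge of $T$ leaves $\ell_0$, so $X_0$ is the union of a ``collar'' $C_0$ (the fattened stars along $\ell_0$, a uniformly thin band carrying $\partial_0 X_0$) with a locally finite disjoint family of ``branches'' $X_1,X_2,\dots$, each a fattened rooted binary tree attached to $C_0$ along a single rib; every boundary component of $X_0$ other than $\partial_0 X_0$ is, outside a uniformly bounded part, contained in the branches. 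Fixing $\partial_1 X_0$ gives the analogous $C_1$ and branches $X_1',X_2',\dots$. Since a boundary component is bi-Lipschitz equivalent to $\R$, after possibly composing with a reflection $P_0$ is an increasing $K_0$-bi-Lipschitz self-map of $\R$ up to additive error $O(K_0)$, so it induces a monotone assignment $\phi$ of domain-branch indices to range-branch indices in which every fibre of $\phi$ and every gap left by $\phi$ has size at most $O(K_0)$.

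I would then build $P$ by extending $P_0$ first across a thin collar neighbourhood of $\partial_0 X_0$ inside $C_0$ --- honestly, since a bi-Lipschitz homeomorphism of the long side of a uniformly thin strip extends over the strip --- and then across the branches, absorbing the (essentially one-dimensional) stretching and compressing of $P_0$ in the depth direction. The engine here is self-similarity: after deleting a compact neighbourhood of its root, a fattened rooted binary tree splits into any prescribed number $m$ of disjoint fattened rooted binary subtrees sitting $O(\log m)=O(\log K_0)$ ribs deep, and any two fattened rooted binary trees are related by a homeomorphism that is an honest isometry outside a compact neighbourhood of the root; the ribs being as thin as $1/3^{100}$ leaves precisely the slack needed for this surgery and for regluing without overlaps. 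So a domain-branch $X_i$ is pushed $O(\log K_0)$ ribs inward and its subtrees are distributed, each isometrically, to fill exactly the cluster of $\le O(K_0)$ range-branches that $P_0$ assigns to the position of $X_i$ (possibly together with short connecting pieces of $C_1$, to keep the image connected), the leftover subtrees and the bounded central region being swallowed by one of those range-branches. Assembling the pieces along the locally finitely many seams yields a homeomorphism $P\colon X_0\to X_0$ restricting to $P_0$ on $\partial_0 X_0$. On any other boundary component the map is, outside a bounded set, a concatenation of isometries post-composed with the fixed model homeomorphism, hence $K$-bi-Lipschitz with $K$ depending on $X_0$ alone, while globally $P$ is $\psi(K_0)$-bi-Lipschitz, $\psi$ collecting the $O(K_0)$ of the collar extension and the $O(\log K_0)$ of the branch repositioning.

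The heart of the argument, and the step I expect to be the main obstacle, is the combinatorial bookkeeping that makes this a genuine homeomorphism of all of $X_0$ while keeping the constant $K$ on the other boundary components independent of $K_0$: one must route the branches so that the assignment is an exact bijection --- every range-branch, including those skipped by $\phi$, is covered once and only once, every point of every domain-branch has an image --- and reconcile this with the collar-to-collar part so the two fit together continuously; one must confine the unavoidable $K_0$-distortion of $P_0$ to the collar and to compact central regions of branches, so that no whole boundary component ever sees it; and one must perform the cut-and-reglue of fattened binary subtrees without collisions, which is what the scale $1/3^{100}$ is for. This is carried out by an explicit induction marching along $\ell_0$ and $\ell_1$ in lockstep and allotting range-branches to domain-branches according to $\phi$; once the map is in hand, the quantitative bounds follow from the elementary fact that a map built from isometries on pieces, with a uniformly bounded number of controlled seams in every ball, is bi-Lipschitz with constant governed by its worst piece.
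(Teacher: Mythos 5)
This statement is not proved in the paper at all: Smirnov imports it verbatim as Theorem~1.2 of Behrstock--Neumann \cite{BN} and uses it as a black box, so there is no ``paper's own proof'' to compare against. Your sketch should therefore be judged as a reconstruction of the [BN] argument. The skeleton you describe --- decompose $X_0$ into a thin collar $C_0$ along the bi-infinite geodesic $\ell_0$ carrying $\partial_0 X_0$ together with branches hanging off it, read $P_0$ as a monotone $K_0$-bi-Lipschitz self-map of $\mathbb{R}$ inducing an assignment $\phi$ of domain-branches to range-branch clusters of size $O(K_0)$, and exploit self-similarity of the fattened binary tree to reroute --- is indeed the correct geometric picture and is consistent with how such extension theorems for fattened trees are proved.

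However, the step you flag as ``the heart of the argument'' is where your sketch actually has a gap, and it is not just a matter of bookkeeping. Your surgery cuts a domain-branch $X_i$ at depth $d\sim\log K_0$ into $\sim K_0$ fattened binary subtrees and sends these, one each, isometrically into the $\sim K_0$ range-branches $X'_{j^-},\dots,X'_{j^+}$ of the cluster assigned to $i$. But consider the boundary component $\gamma_0$ of $X_0$ that passes through the root of $X_i$: it runs from deep in the leftmost depth-$d$ subtree, up to the root, and back down into the rightmost depth-$d$ subtree. Under a homeomorphism $P$ it must land on a single boundary component of $X_0$, and that image would have to run from deep in $X'_{j^-}$ to deep in $X'_{j^+}$. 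But every boundary component of $X_0$ other than $\partial_1 X_0$ meets $\ell_1$ in at most one edge (at each vertex of $\ell_1$ the corner of the triangle along $\ell_1$ is already occupied by $\partial_1 X_0$, so any other boundary arc is forced into the branch), hence spans at most two \emph{adjacent} range-branches. So there is simply no boundary component available as the image of $\gamma_0$ under the ``split at depth $\log K_0$ and distribute'' map, and the construction as written is not a homeomorphism of $X_0$. The same problem recurs one level down for the boundary components through depth-$1$ and depth-$2$ nodes. The fix is a more delicate \emph{recursive} (balanced-binary) reallocation in which, at each split, the two halves of a subtree go to two sub-clusters that are adjacent at a single designated vertex of $\ell_1$, and the branch-root boundary component is explicitly routed through that vertex; this also changes the analysis of the $K$-uniformity on the other boundary components, since the place where the $K_0$-dependent distortion is absorbed is then spread across $\sim\log K_0$ levels rather than a single central region. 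Your sentence ``one must confine the $K_0$-distortion \dots so that no whole boundary component ever sees it'' correctly identifies the requirement, but the mechanism you propose does not meet it, so the claim that $K$ is independent of $K_0$ on the other boundary components is not established by the sketch.
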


\begin{corollary}\label{slebn}
Let 
$\wt{\Phi}_v$ 
be the universal cover of the surface 
$\Phi_v$
supplied with the metric described in sect.~\ref{subsubsect:mnpc}
with a chosen boundary component $\partial_0 \wt{\Phi}_v$.
Then there exists $K>0$ and a function $\psi\colon \R \to \R$ 
such that for any $K_0>0$ and any $K_0$--bi-Lipschitz
homeomorphism $P_0$ from $\partial_0\wt{\Phi}_v$ to a boundary component $\partial_0 \Hh$, 
$P_0$ extends to a $\psi(K_0)$-bi-Lipschitz homeomorphism $P\colon \wt{\Phi}_v \to \Hh$ which
is $K$--bi-Lipschitz on every other boundary component.
\end{corollary}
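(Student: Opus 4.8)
The plan is to deduce Corollary~\ref{slebn} from Theorem~\ref{bnlemma} by transporting the bi-Lipschitz machinery through the fixed comparison maps between $\wt\Phi_v$, $\Hh$, and the standard surface $X_0$. The essential point is that $\wt\Phi_v$, $\Hh$ and $X_0$ are all pairwise bi-Lipschitz homeomorphic via \emph{fixed} homeomorphisms, so Theorem~\ref{bnlemma}, which is stated for $X_0$, can be pushed to the other two spaces at the cost of absorbing a few fixed constants into $K$ and composing with $\psi$.

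First I would record the fixed bi-Lipschitz homeomorphisms: we already have $h_0\colon\Hh\to X_0$ from the excerpt, and one needs an analogous fixed homeomorphism $g_v\colon\wt\Phi_v\to\Hh$. The latter exists because $\wt\Phi_v$, being the universal cover of a compact surface of nonnegative Euler characteristic with geodesic boundary in the metric of sect.~\ref{subsubsect:mnpc}, is a ``fattened tree'' of the same combinatorial type as $\Hh$ (both have the binary tree as their dual graph, modulo the vertices of degree one at infinity), hence is bi-Lipschitz homeomorphic to $X_0$ and therefore to $\Hh$; fix such a $g_v$, with bi-Lipschitz constant $L_v$. Composing, $f_v:=h_0\circ g_v\colon\wt\Phi_v\to X_0$ is $L_v'$-bi-Lipschitz for some fixed $L_v'$, and $h_0^{-1}$ is $L_0$-bi-Lipschitz for some fixed $L_0$.

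Next, given the data of the Corollary — a $K_0$-bi-Lipschitz map $P_0\colon\partial_0\wt\Phi_v\to\partial_0\Hh$ — I would transport it to the standard surface. On boundary components the maps $f_v$ and $h_0$ restrict to bi-Lipschitz maps with controlled constants (here one uses that the chosen homeomorphisms carry grids to within bounded distortion of grids), so $Q_0:=h_0\circ P_0\circ (f_v|_{\partial_0})^{-1}$ is a $K_1(K_0)$-bi-Lipschitz homeomorphism between two boundary components of $X_0$, where $K_1(K_0)=c\,K_0$ for a fixed constant $c$ depending only on the restriction constants of $f_v$ and $h_0$. Apply Theorem~\ref{bnlemma} to $Q_0$ to obtain a $\psi_0(K_1(K_0))$-bi-Lipschitz extension $Q\colon X_0\to X_0$ that is $K_{X_0}$-bi-Lipschitz on every other boundary component, where $\psi_0$ and $K_{X_0}$ are the function and constant furnished by Theorem~\ref{bnlemma}.

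Finally I would push $Q$ back: set $P:=h_0^{-1}\circ Q\circ f_v\colon\wt\Phi_v\to\Hh$. Then $P$ is $\psi(K_0)$-bi-Lipschitz with $\psi(K_0):=L_0\cdot\psi_0(c\,K_0)\cdot L_v'$, which depends only on $K_0$; on every boundary component other than $\partial_0\wt\Phi_v$, $P$ is the composition of the three restricted maps, hence $K$-bi-Lipschitz with $K:=L_0'\cdot K_{X_0}\cdot L_v''$ for fixed restriction constants; and $P$ restricts on $\partial_0\wt\Phi_v$ to $h_0^{-1}\circ Q_0\circ f_v|_{\partial_0}=h_0^{-1}\circ h_0\circ P_0\circ(f_v|_{\partial_0})^{-1}\circ f_v|_{\partial_0}=P_0$, as required. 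I expect the only genuine subtlety to be the bookkeeping on boundary components: one must make sure the fixed homeomorphisms $g_v,h_0$ are \emph{uniformly} bi-Lipschitz on \emph{all} boundary components (not just with a per-component constant), so that the constant $c$ in $K_1(K_0)=c\,K_0$ and the final constant $K$ really are independent of which boundary component one looks at and of $P_0$. This is where one invokes the grid structure — the homeomorphisms can be arranged to send grids to uniformly bounded perturbations of grids — and after that the rest is routine composition of bi-Lipschitz estimates.
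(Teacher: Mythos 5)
Your argument is correct and is essentially the paper's own proof: both transfer $P_0$ to a boundary homeomorphism of the standard surface $X_0$ via the fixed bi-Lipschitz comparison maps ($h_0$ and a fixed homeomorphism $\wt{\Phi}_v\leftrightarrow X_0$), apply Theorem~\ref{bnlemma} there, and conjugate the resulting extension back, composing bi-Lipschitz constants along the way. The concern you raise about uniformity over boundary components is already handled automatically since the comparison maps are globally bi-Lipschitz, and the grid-distortion remark is not needed for this step.
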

\begin{proof}
Rename the 
$K_0$, $K$ 
and 
$\psi$ 
from Theorem~\ref{bnlemma} to the 
$\bar{K}_0$, $\bar{K}$ 
and 
$\bar{\psi}$ 
respectively. 
Note that there is a bi-Lipschitz homeomorphism 
$\psi_v\colon X_0 \to \wt{\Phi}_v$. 

$$
\begin{CD}
X_0 @>>> X_0 \\
@V{\psi_v}VV @AA{h_0}A \\
\wt{\Phi}_v @>>> \Hh
\end{CD}
$$

Let $\psi_v$ be $M_1$-bi-Lipschitz and $h_0$ be $M_2$-bi-Lipschitz. 
We set $M:=\max\{M_1,M_2\}$.
Consider $P_0:\partial_0\wt{\Phi}_v\to\partial_0 \Hh$. 
Denote the boundary component $h_0(\d_0 H_0)$ by $\d_1 X_0$. 
Then  
if 
$\partial_0 X_0=\psi^{-1}_v(\d_0 \wt{\Phi}_v)$ the map 
$$h_0\circ P_0\circ \psi_v:\d_0 X_0\to\d_1 X_0$$ 
is $M^2\cdot K_0$-bi-Lipschitz homeomorphism. 
By Theorem~\ref{bnlemma} it extends to a
$\bar{\psi}(M^2\cdot K_0)$-bi-Lipschitz
homeomorphism that is $\bar{K}$-bi-Lipschitz
on each remaining boundary component of the space $X_0$.
Therefore, the homeomorphism 
$P=h^{-1}_0\circ \bar{P}\circ \psi^{-1}_v$ 
is  
$\psi(K_0)=M^2\bar{\psi}(M^2\cdot K_0)$-bi-Lipschitz. 
Moreover, on every other boundary component it is 
$K=M^2\cdot \bar{K}$-bi-Lipschitz.
Therefore, for $\psi(x)=\bar{\psi}(M^2\cdot x)$, $x\in \R$ and  
$K=M^2\cdot \bar{K}$ Corollary~\ref{slebn} is proved.
\end{proof}

\begin{remark}
Since the graph $G$ is finite, we can assume that the number $K$ and the function $\psi$
are independent of $v\in\V(G)$.
\end{remark}

\subsubsection{The special metric on the universal cover}\label{subsubsect:ucmetr}

In this section
we inductively construct for each orthogonal graph-manifold $M$ 
a special metric on its universal covering $\M$ 
so that $\M$ with such a metric is quasi-isometric to the fundamental group
$\pi_1(M)$ of the graph-manifold $M$.
Afterwards it will be sufficient to construct a quasi-isometric embedding 
of $\M$ into a product of $n$ trees. 

The decomposition of
$M$
into blocks lifts to a decomposition of
$\M$ 
into universal cover blocks, see \cite{BK}. 
We denote the tree dual to this decomposition by $T_0$.
Note that the degree of every vertex of
$T_0$
is infinite. On 
$T_0$,
we consider an intrinsic metric with length 1 edges.
Choose a vertex $o\in \V(T_0)$ in the tree  $T_0$ and call it {\em the root}. 
For each vertex $v \in \V(T_0)$, we define its rank $r(v)$ as 
the distance to $o$.
In particular, $r(o) = 0$.
For each vertex $v\in \V(T_0)$, we denote by $\M_v$
the corresponding block of the space $\wt{M}$.
This block is isometric to the product  
$\wt{\Phi}_v\times \R^{n-2}$.
Recall that on each boundary hyperplane of the block $\M_v$ 
we have fixed a coordinate system.
We call the axes of this system {\em the selected axes}.

By induction on the rank of vertices of the tree $T_0$,  
we construct on the space $\M$ a metric of special type, which is bi-Lipschitz homeomorphic 
to the metric lifted from the graph-manifold $M$.

\texttt{Base:}
Let
$\psi_o:\wt{\Phi}_o\to \Hh$
be a bi-Lipschitz homeomorphism.
Consider an isometric copy of the standard block 
$B_o$ 
and consider the map
$\psi'_o\colon \M_o\to B_o$
which is a direct product of the map
$\psi_o\colon \wt{\Phi}_o\to \Hh$ 
and the identity map 
$\id\colon \R^{n-2}\to \R^{n-2}$.

\texttt{Inductive step:} 
Suppose that for all vertices 
$v\in \V(T_0)$
such that
$r(v)\leq m$
we built a bi-Lipschitz homeomorphism 
$\psi'_v\colon \M_v\to B_v$ 
where $B_v$ is an isometric copy of the standard block.
Consider a vertex
$u\in \V(T_0)$ 
such that
$r(u)=m+1$. 
There is a unique vertex 
$v \in \V(T_0)$
adjacent to it such that $r(v)=m$.
Consider the blocks
$\M_v$ and $\M_u$ 
of the universal cover 
$\M$. 
Denote the covering map by $\pr \colon \M \to M$.
Recall that the gluing of the blocks
$\pr(\M_v)$ and $\pr(\M_u)$ 
is obtained by the permutation 
$\s$ 
of the coordinate system on the torus 
$ T_ {e} $.
Consider an isometric copy of the standard block. 
Denote it by $B_u$
and glue it to the block $B_v$ 
by the permutation $\s^{-1}$ of the coordinates along the corresponding hyperplanes
thus matching the grid on them.
By the induction, the map 
$\psi'_v\colon \M_v\to B_v$  
is the direct product of the map 
$\wt{\Phi}_v\to \Hh$ 
and $n-1$ maps $\R\to \R$.
Moreover, the restriction of each of these maps
to the intersection with the common boundary hyperplane 
of the blocks 
$\M_v$ and $\M_u$ 
is
a $K$-bi-Lipschitz homeomorphism onto its image. 
It follows from the orthogonality of the gluing that these restrictions induce 
a $K$-bi-Lipschitz homeomorphism 
$\partial_1 \psi_u\colon \partial_0 \wt{\Phi}_u\to \partial_0 \Hh$ 
from the boundary component $\d_0\wt{\psi}_u$ 
of the surface $\wt{\Phi}_u$
adjacent to the block
$\M_v$
to the boundary component 
$\d_0 \Hh$ of the $\theta$-tree
adjacent to the block 
$B_v$. 
Also, these restrictions induce 
a collection of $K$-bi-Lipschitz homeomorphisms
$\partial_i:\R\to \R$, 
each of which
maps the corresponding $\R$-factor 
of the decomposition
$\M_u=\wt{\Phi}_u\times\R\times\ldots\times\R$ 
to the corresponding 
$\R$-factor 
of the decomposition
$B_u=\Hh\times\R\times\ldots\times\R$.

By Corollary~\ref{slebn},  
the homeomorphism 
$\partial_1\psi_u$
extends to a 
$\psi(K)$-bi-Lipschitz homeomorphism
$\psi_u\colon \wt{\Phi}_u\to \Hh$, 
which
is $K$--bi-Lipschitz on every other boundary component.
We define the homeomorphism 
$\psi'_u$
as the direct product of the 
homeomorphism  
$\psi_u$ 
and $n-3$ homeomorphisms 
$\partial_i$ ($i=2,\ldots,n-2$). 

Let us construct a map
$\psi_M\colon\M\to X$, 
where $X$ is a metric space obtained by gluing blocks described above.
Namely, if the point 
$x$ 
lies in the block
$\M_v$
for some vertex
$v \in \V(T_0)$
we define $\psi_M(x):=\psi'_v(x)$.
The map 
$\psi_M$
is well defined, since the maps $\psi'_v$ are compatible with each other.

\begin{proposition}
The map constructed above is a bi-Lipschitz homeomorphism.
\end{proposition}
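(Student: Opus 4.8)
The plan is to reduce the proposition to two facts: (i) that each $\psi'_v$ is an $L$-bi-Lipschitz homeomorphism of the block $\M_v$ onto $B_v$ for a single constant $L$ not depending on $v\in\V(T_0)$; and (ii) that a family of block-wise bi-Lipschitz homeomorphisms which agree along the separating hyperplanes assembles into an $L$-bi-Lipschitz homeomorphism of the glued length spaces $\M$ and $X$. Recall from the construction that $\psi_M$ is well defined; by the same compatibility the inverses $(\psi'_v)^{-1}$ agree on the hyperplanes $B_v\cap B_u$ and glue to a map $X\to\M$ which is a two-sided inverse of $\psi_M$, so $\psi_M$ is already a bijection and only the metric estimates remain.

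For (i): by construction $\M_v$ is isometric to $\wt{\Phi}_v\times\R^{n-2}$ with the product metric, $B_v=\Hh\times\R^{n-2}$ carries the product metric, and $\psi'_v$ is the direct product of the surface homeomorphism $\psi_v\colon\wt{\Phi}_v\to\Hh$ and bi-Lipschitz homeomorphisms of the Euclidean factors. Since a direct product of $L$-bi-Lipschitz maps is $L$-bi-Lipschitz for product metrics, $\psi'_v$ is bi-Lipschitz with constant the maximum of those of its factors, and it remains to bound the latter uniformly. A short induction on the rank does this: off the root the Euclidean factors are $K$-bi-Lipschitz; from rank $2$ on, the boundary homeomorphism handed to Corollary~\ref{slebn} to build $\psi_v$ is the $K$-bi-Lipschitz restriction of the parent map, so $\psi_v$ is $\psi(K)$-bi-Lipschitz; and at the root and at rank $1$ one picks up only the constant $L_0$ of $\psi_o$ and its image $\psi(L_0)$. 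Nothing grows with the rank, and $K,\psi$ are independent of $v$ by the Remark after Corollary~\ref{slebn}, so all factor constants are bounded by some $L$; then $\psi'_v$ and $(\psi'_v)^{-1}$ are $L$-bi-Lipschitz for every $v$.

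For (ii): both $\M$ and $X$ are, by construction, length spaces obtained by gluing the blocks $\M_v$, resp. $B_v$, along the separating hyperplanes by grid-preserving isometries, so (see~\cite{BBI}) the distance between two points equals the infimum, over finite chains $x=a_0,a_1,\dots,a_m=y$ whose consecutive members $a_{j-1},a_j$ lie in a common block $\M_{v_j}$, of $\sum_j|a_{j-1}a_j|_{\M_{v_j}}$, each block being taken with its own metric. Given $x,y\in\M$ and such a chain with $\sum_j|a_{j-1}a_j|_{\M_{v_j}}\le|xy|_{\M}+\varepsilon$, the images $\psi_M(a_{j-1})=\psi'_{v_j}(a_{j-1})$ and $\psi_M(a_j)=\psi'_{v_j}(a_j)$ lie in the common block $B_{v_j}$, so, using the triangle inequality, that the metric of $X$ is dominated by the intrinsic metrics of the $B_v$, and (i),
$$|\psi_M(x)\,\psi_M(y)|_X\le\sum_j|\psi'_{v_j}(a_{j-1})\,\psi'_{v_j}(a_j)|_{B_{v_j}}\le L\sum_j|a_{j-1}a_j|_{\M_{v_j}}\le L\,(|xy|_{\M}+\varepsilon).$$
Letting $\varepsilon\to0$ gives $|\psi_M(x)\,\psi_M(y)|_X\le L\,|xy|_{\M}$; the symmetric argument with $\psi_M^{-1}$ and the chain description of the metric of $X$ gives $|xy|_{\M}\le L\,|\psi_M(x)\,\psi_M(y)|_X$. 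Hence $\psi_M$ is $L$-bi-Lipschitz, and being a bijection it is a bi-Lipschitz homeomorphism.

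The step I expect to be the main obstacle is the uniform bound in (i): the whole scheme collapses if the bi-Lipschitz constants of the $\psi'_v$ grow with the rank of $v$, and ruling this out hinges on the fact that at every inductive step Corollary~\ref{slebn} (equivalently Theorem~\ref{bnlemma}) is applied to boundary data that is $K$-bi-Lipschitz rather than merely $\psi(K)$-bi-Lipschitz — which is precisely what the clause ``$K$-bi-Lipschitz on every other boundary component'' was arranged to secure. The only genuinely technical point in (ii) is the routine verification that an almost length-minimizing path can be replaced, with arbitrarily small loss, by a chain whose consecutive members lie in a common block; one inserts the points where the path crosses the separating hyperplanes, using that at most two blocks meet near any point of $\M$, since distinct boundary hyperplanes of a single block $\M_v$ are of the form $\partial_i\wt{\Phi}_v\times\R^{n-2}$ for pairwise disjoint boundary geodesics $\partial_i\wt{\Phi}_v$ and hence do not intersect.
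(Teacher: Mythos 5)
Your proof is correct and follows essentially the same strategy as the paper: first obtain a uniform bi-Lipschitz constant on each block, then assemble the estimate across blocks using the fact that both spaces are length spaces glued from blocks along separating hyperplanes. The only difference in execution is that the paper, relying on $\M$ and $X$ being Hadamard spaces, simply takes the geodesic $xy$, records the points $z_i$ where it crosses the successive separating hyperplanes, applies the block-wise estimate to each segment $z_iz_{i+1}$, and finishes with the triangle inequality in $X$ (and symmetrically for the inverse); you instead invoke the chain/infimum characterization of the glued length metric, which is slightly more general but requires the extra remark that almost-minimizing paths can be subdivided at hyperplane crossings, which you correctly flag. You also spell out the rank induction for the uniform constant in part (i) in more detail than the paper, which just sets $C=\max\{K,\psi(K)\}$ and asserts the bound; your observation that the inductive step feeds Corollary~\ref{slebn} only $K$-bi-Lipschitz boundary data (thanks to the "$K$-bi-Lipschitz on every other boundary component" clause), so the constants do not grow with rank, is exactly the point the paper's construction is designed to secure.
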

\begin{proof}
Let $C=\max\{K,\psi(K)\}$.
It follows from the construction that for each vertex
$u\in\V(T_0)$
the map
$\psi'_u$ 
is $C$-bi-Lipschitz.
Suppose that $x\in \M_v$ for some vertex $v \in \V(T)$
and $y\in \M_u$ for some vertex
$u\in\V(T)$.
Denote $x':=\psi_M(x)$ and $y':=\psi_M(y)$.

Let $\gamma$ be a geodesic between vertices $v$ and $u$ in the tree $T$. 
Denote its consecutive edges by 
$e_1,\ldots,e_k$.
Note that a geodesic 
$xy\subset\M$ 
consecutively intersects hyperplanes
$\sigma_1,\ldots,\sigma_k$
in the space 
$\M$ 
that correspond to these edges. 
Similarly, a geodesic
$x'y'\subset X$
consistently intersects hyperplanes 
$\sigma'_1,\ldots,\sigma'_k$
in the space $X$. 
Moreover, $\sigma'_i=\psi_M(\sigma_i)$.
Let $z_i$ be an intersection point of the geodesic $xy$ and the hyperplane $\sigma_i$.  
(We assume that $z_0 = x$, $z_{k +1} = y$.)

Let $z'_i=\psi_M(z_i)$.
Since for each vertex 
$v$
the restriction of the map 
$\psi_M$
on the block 
$\M_v$
is $C$-bi-Lipschitz and 
the points 
$z_i$ 
and 
$z_ {i +1}$ ($i =0, \ldots, k$) lie in the same block,
we have
$|z'_iz'_{i+1}|\leq C|z_iz_{i+1}|$.
Combining all these inequalities, we find that
$\sum\limits^{k}_{i=0}|z'_iz'_{i+1}|\leq C|xy|$. 
On the other hand, by the triangle inequality we have 
$|x'y'|\leq \sum\limits^{k}_{i=0}|z'_iz'_{i+1}|$. 
This implies that 
$|x'y'|\leq C|xy|$.
Similarly, we have $|xy|\leq C|x'y'|$. 
\end{proof}

Thus, we define a metric of special type on the universal cover 
of the orthogonal graph-manifold.
Such a metric has nonpositive curvature in the sense of Alexandrov.
Moreover, for every vertex 
$v\in\V(T_0)$  
the corresponding block
$\M_v$ 
is isometric to the direct product
of the $\te$-tree $\Hh$ 
and $n-2$ factors $\R$.

Let us introduce some technical notations that will be needed later.
For each vertex 
$v$ 
and the corresponding block of 
$\M_v$, 
denote by 
$X_v$
a copy of the corresponding $\te$-tree.
Moreover, denote a copy of the tree $T_ {bin}$ naturally (isometrically) embedded  
in the surface $X_v$,
considered with the above described metric, by $T_v$.
Let 
$G'_{\te}$ be the isometry group of the 
$\te$-tree.
Note that for each vertex 
$v$ there exists 
$2\delta$-Lipschitz retraction 
$r_v\colon X_v\to T_v$
equivariant under the action of 
$G'_\te$. 

Recall that the block $\M_v$ is 
a product $X_v\times\R\times\ldots\times\R$.  
Denote the projections to the corresponding factors
by $p^1_v,\ldots,p^{n-1}_v$.

For each point $x\in \M_v$
consider the map given by 
$$\pi_x (y):=(y, p^2_v(x), \ldots, p^{n-1}_v (x))\ \hbox{for every point $y \in X_v$.}$$

We call an orthogonal graph-manifold {\em irreducible}
if its universal
cover 
$\M$ 
is not a product of a Euclidean space and
universal cover of the orthogonal graph-manifold of lower dimension. 
It suffices to prove Theorem~\ref{main:thm} for the irreducible case.

\section{Trees $T_c$ and maps to them}
\label{sect:trees}

\subsection{Construction of trees}
\label{subsect:trees}

Let 
$\gamma = w_1\ldots w_k$ 
be an oriented path in the tree 
$T_0$.
Denote by 
$\s_{\gamma}$ the permutation 
$\s_{w_k} \circ \ldots \circ \s_{w_1}$
of well-ordered $(n-1)$-element set.

Suppose that vertices 
$u, v \in T_0$ 
are connected by two oriented paths
$\gamma_1$ and $\gamma_2$.
Note that 
$\s_{\gamma_1}=\s_{\gamma_2}$, 
therefore, we can define the permutation
$\s_{uv}$
as the permutation 
$\s_{\gamma}$ 
along any path 
$\gamma$ between 
$u$ 
and 
$v$. 
Furthermore, $\s_{v_1v_3}=\s_{v_2v_3}\circ  \s_{v_1v_2}$ and $\s_{uv}=\s_{vu}^{-1}$.

We define a relation $\sim$ on the set of vertices of $T_0$ by
$u\sim v$  
if and only if the permutation 
$\s_{uv}$
fixes the smallest element.
It is easy to check that the relation $\sim$ is an equivalence relation.

Let us prove that the relation 
$\sim$ 
divides the set 
$\V(T_0)$ 
into not more than $n-1$
equivalence classes.
Indeed, if it fails, then we can choose $n$ pairwise non-equivalent vertices
$v_1,\ldots,v_n$. 
Then, for some different 
$2\leq i,j\leq n$,  
we have
$\s_{v_1v_i}(x_0)=\s_{v_1v_j}(x_0)$, 
where
$x_0$ is the smallest element.
Then, since 
$\s_{v_iv_j}(x_0)=\s_{v_1v_j}\circ \s_{v_iv_1}(x_0)=x_0$,
$v_i\sim v_j$. 
This is a contradiction. 

Fix a vertex 
$u$ 
in the tree 
$T_0$. 
Since the manifold 
$M$ 
is irreducible, 
the set of permutations
$\{\s_{vu}\mid u,v\in \V(T_0)\}$ 
is transitive. 
That is, for each element 
$x$ 
of a well-ordered $(n-1)$-element set, there is a vertex  
$v\in \V(T_0)$
that 
$\s_{vu}(x_0)=x$.
It follows that there are at least 
$n-1$ 
different equivalence classes. 
Hence there are exactly 
$n-1$. 

Denote the set of all these classes by $\C$.
We have shown that 
$|\C| = n-1$. 
Given 
$c\in \C$, 
note that if 
$u,v\in c$ 
then for any vertex 
$v'\in \V(T_0)$  
we have 
$\s_{uv'}(x_0)=\s_{vv'}(x_0)$. 

Fix a vertex 
$v\in\V(T_0)$ 
and an equivalence class 
$c\in \C$. 
We construct a tree 
$T_{v,c}$ 
as follows.
If the vertex 
$v$ 
belongs to the class 
$c$
then we set 
$T_{v, c}:=T_v$, 
see the end of section~\ref{sect:definitions}. 
Otherwise, we set $T_{v,c}:=\R$.

Also, for each vertex 
$v\in \V(T_0)$ 
we construct a map
$r_{v,c}\colon \M_v\to T_{v,c}$.
If the vertex 
$v$ 
belongs to 
$c$
then we set
$r_{v,c}:=r_v\circ p^1_v$.
Otherwise, if the vertex 
$v$ 
belongs to some class
$c'\neq c$,
then we set 
$r_{v,c}=p^{k}_v$, 
where 
$k=\s_{uv}(x_0)$, 
$u\in c$, 
and 
$x_0$ is the smallest element. 

For each class 
$c\in \C$, 
we construct a tree of 
$T_c$ 
as follows.
For each pair of adjacent vertices 
$u, v\in\V(T_0)$,  
we say that a point 
$x\in T_{u, c}$ 
and a point 
$y \in T_{v, c}$
are
{\em $\sim_c$-equivalent} 
if there exists a point
$z\in \M_u\cap \M_v$ 
such that 
$x=r_{u,c}(z)=r_{v,c}(z)=y$. 

This relation is well defined. Indeed, for every point 
$x \in T_{u, c}$ 
the preimage
$r^{-1}_{u,c}(x)\cap\M_u\cap\M_v$ 
is an 
$(n-2)$-dimensional 
subspace orthogonal to the coordinate 
$\s_{v'u}(x_0)$, 
where the vertex 
$v'$ 
belongs to 
$c$.
Similarly, for each point
$y\in T_{v,c}$ 
the preimage 
$r^{-1}_{v,c}(x)\cap\M_u\cap\M_v$ 
is 
an 
$(n-2)$-dimensional 
subspace orthogonal to the coordinate
$\s_{v'v}(x_0)$.
But by the definition of coordinates 
$\s_{v'u}(x_0)$ 
and 
$\s_{v'v}(x_0)$, 
any two such subspaces are either disjoint or coincide.
This implies immediately the following lemma.

\begin{lemma}\label{lem:equiv}
Let 
$u, v\in \V(T_0)$ 
be a pair of adjacent vertices and 
$c\in \C$ 
be an equivalence class.  
If the points 
$x,x'\in T_{u,c}$ 
and 
$y\in T_{v,c}$ 
are such that 
$x\sim_c y$ 
and 
$x'\sim_c y$,
then 
$x=x'$.  
\end{lemma}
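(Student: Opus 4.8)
The plan is to unwind the definitions of $r_{u,c}$ and $r_{v,c}$ on the common hyperplane $\M_u\cap\M_v$ and reduce everything to the elementary fact already recorded just before the statement: two fibers of the relevant coordinate projections are either disjoint or equal. First I would fix the pair of adjacent vertices $u,v$ and the class $c$, and split into cases according to whether $u$ and $v$ belong to $c$. In every case the maps $r_{u,c}$ and $r_{v,c}$, when restricted to $\M_u\cap\M_v$, factor through projection onto a single coordinate direction — either the $X_v$-factor composed with the retraction $r_v$ (when the vertex lies in $c$), or one of the Euclidean projections $p^k_v$ with $k=\s_{uv}(x_0)$ (when it does not). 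The point is that on the intersection hyperplane these two descriptions pick out \emph{the same} coordinate axis, because of the orthogonality of the gluing and the compatibility relation $\s_{uv'}(x_0)=\s_{vv'}(x_0)$ for $u,v$ in the same class that was established when constructing $\C$.

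Next I would make precise the statement, already sketched in the paragraph preceding the lemma, that for each $x\in T_{u,c}$ the set $r^{-1}_{u,c}(x)\cap\M_u\cap\M_v$ is an $(n-2)$-dimensional affine subspace of the hyperplane $\M_u\cap\M_v$ which is orthogonal to the selected axis indexed by $\s_{v'u}(x_0)$, where $v'\in c$; symmetrically for $y\in T_{v,c}$ with the axis indexed by $\s_{v'v}(x_0)$. Since $u$ and $v$ are adjacent, the two indices $\s_{v'u}(x_0)$ and $\s_{v'v}(x_0)$ are related by a single transposition $\s_w$, and by the definition of the coordinate systems on $T_w$ and $T_{-w}$ these two families of parallel $(n-2)$-planes coincide as unordered partitions of the hyperplane — two such subspaces are either disjoint or identical.

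With this in hand the lemma is immediate. If $x\sim_c y$ then there is $z\in\M_u\cap\M_v$ with $r_{u,c}(z)=x$ and $r_{v,c}(z)=y$, so $z$ lies in both $r^{-1}_{u,c}(x)$ and $r^{-1}_{v,c}(y)$; hence these two $(n-2)$-dimensional slices intersect, and therefore coincide. If also $x'\sim_c y$, then $r^{-1}_{v,c}(y)\cap\M_u\cap\M_v$ equals both $r^{-1}_{u,c}(x)\cap\M_u\cap\M_v$ and $r^{-1}_{u,c}(x')\cap\M_u\cap\M_v$, so $r^{-1}_{u,c}(x)$ and $r^{-1}_{u,c}(x')$ meet $\M_u\cap\M_v$ in the same nonempty set; since $r_{u,c}$ restricted to that hyperplane is (up to the retraction $r_v$, which is injective on $T_v$) just a coordinate projection whose fibers over distinct points are disjoint, this forces $x=x'$.

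The main obstacle is bookkeeping rather than anything deep: one must keep straight, across the three or four cases, exactly which coordinate axis of $\M_u$ (respectively $\M_v$) the map $r_{u,c}$ (respectively $r_{v,c}$) reads off on the intersection hyperplane, and verify that the permutations $\s_{v'u}$ and $\s_{v'v}$ (with $v'\in c$) send the smallest element $x_0$ to indices that correspond to the \emph{same} axis of the hyperplane $\M_u\cap\M_v$ — this is precisely where the orthogonality of the gluing and the relation $\s_{uv}=\s_w$ along the single edge $w=uv$ enter. Once the indexing is pinned down, the disjoint-or-equal dichotomy for parallel codimension-one-in-the-hyperplane subspaces closes the argument.
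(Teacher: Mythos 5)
Your proposal is correct and follows essentially the same route as the paper: unwind $r_{u,c}$ and $r_{v,c}$ on the common hyperplane $\M_u\cap\M_v$, observe that each fiber is an $(n-2)$-dimensional slice orthogonal to the selected axis indexed by $\s_{v'u}(x_0)$ (resp. $\s_{v'v}(x_0)$), note that by the gluing these indices refer to the same physical axis so the two families of slices coincide, and conclude from the disjoint-or-equal dichotomy. This is precisely the paragraph the paper places immediately before the lemma. One small imprecision worth fixing: $\s_w$ is a general permutation of the $(n-1)$-element coordinate set with $\s_w(x_0)\neq x_0$, not necessarily a transposition; the relation you actually need is just $\s_{v'v}=\s_w\circ\s_{v'u}$, which holds regardless of the cycle structure of $\s_w$. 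Also, the closing remark ``up to the retraction $r_v$, which is injective on $T_v$'' can be stated more directly: fibers of $r_{u,c}$ over distinct points of $T_{u,c}$ are disjoint simply because they are preimages of distinct points, so once the two slices in $\M_u\cap\M_v$ are forced to coincide, $x=x'$ follows immediately.
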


Extend the relation 
$\sim_c$ 
by transitivity.
This means that we set 
$x\sim_c y$  
if and only if 
there exists a chain
$x=x_0, \ldots, x_l=y$  
that 
$x_i\sim_c x_{i+1}$ 
and 
$x_i\in T_{v_i,c}$  
for each 
$0\leq i\leq l-1$, 
and the vertices 
$v_i$ 
and 
$v_{i+1}$ 
are adjacent in the tree $T_0$.  
From Lemma~\ref{lem:equiv}, it follows that the relation 
$\sim_c$ 
is an equivalence relation.

\begin{lemma}\label{lem:isom}
Let 
$c\in \C$ 
be an equivalence class.
Fix any pair of vertices
$u,v \in c$, 
and  consider points  
$x,y\in T_{u,c}$ 
and points 
$x',y'\in T_{v,c}$  
such that 
$x\sim_c x'$ 
and 
$y\sim_c y'$. 
Then 
$|xy|=|x'y'|$. 
\end{lemma}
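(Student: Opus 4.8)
The plan is to show that the equivalence $\sim_c$ identifies the trees $T_{u,c}$ and $T_{v,c}$ for $u,v\in c$ via an isometry, and that this isometry is precisely the "holonomy" map induced along the geodesic from $u$ to $v$ in $T_0$. First I would reduce to the case where $u$ and $v$ are adjacent in $T_0$: if $\gamma = u = v_0, v_1, \ldots, v_l = v$ is the geodesic in $T_0$, then composing isometries for consecutive pairs gives the general case, provided one checks that each intermediate vertex can be handled — but here a subtlety arises, since the intermediate vertices $v_i$ need not lie in $c$. So instead I would argue directly: by the definition of the extended relation $\sim_c$, there is a chain $x = x_0, \ldots, x_m = x'$ with $x_i \in T_{w_i,c}$, consecutive $w_i$ adjacent, and similarly for $y$; I want to compare the "length" data along these chains.

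The key step is to identify, for $w,w'$ adjacent with $w\in c$ (so $T_{w,c} = T_w$) and $w'\notin c$ (so $T_{w',c} = \R$), exactly what the relation $\sim_c$ does. By the analysis preceding Lemma~\ref{lem:equiv}, a point $x\in T_w = T_{w,c}$ corresponds via $r_{w,c}^{-1}$ to an $(n-2)$-dimensional coordinate subspace of $\M_w \cap \M_{w'}$ orthogonal to the first selected axis, while a point in $T_{w',c}=\R$ corresponds to the subspace orthogonal to the $\s_{uw'}(x_0)$-th axis. The relation $x \sim_c y$ holds iff these subspaces coincide. Since the gluing is orthogonal and matches grids, the resulting correspondence $T_{w,c}\to T_{w',c}$, when it is defined, sends a sub-arc of $T_w$ of a given length isometrically onto a sub-arc of $\R$ of the same length — essentially because $r_w$ restricted to a boundary component of $X_w$ is (via the grid) a local isometry onto that boundary line of the $\theta$-tree, and the Euclidean $\R$-factor carries the flat metric matched by the gluing. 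Iterating: along the chain from $u$ to $v$, each elementary step either stays within $c$ (where $r_{w,c}=r_w\circ p^1_w$ and two adjacent copies $T_w, T_{w'}$ of $\theta$-trees are glued along a common boundary geodesic, hence isometrically on the overlap) or passes through a $\R$-factor and back, and in every case the composite preserves distances on the relevant pieces. So the map $x\mapsto x'$, wherever a chain exists, is a partial isometry from $T_{u,c}$ to $T_{v,c}$.

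To finish, given $x,y\in T_{u,c}$ and $x',y'\in T_{v,c}$ with $x\sim_c x'$, $y\sim_c y'$, I would take the geodesic $[x,y]$ in the tree $T_{u,c}$ and push it across step by step along the geodesic $\gamma$ in $T_0$ joining $u$ to $v$. The point is that, because $\M$ is CAT(0) and $T_0$ is a tree, the chains realizing $x\sim_c x'$ and $y\sim_c y'$ can be taken to run along (a fellow-traveller of) this same geodesic $\gamma$, so that at each vertex $w_i$ of $\gamma$ we obtain points $x_i, y_i \in T_{w_i,c}$ with $x_0=x, y_0=y$, $x_l = x', y_l = y'$, and $x_i\sim_c x_{i+1}$, $y_i\sim_c y_{i+1}$. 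By the partial-isometry property of each elementary step, $|x_i y_i| = |x_{i+1} y_{i+1}|$ for all $i$, hence $|xy| = |x'y'|$. The main obstacle I anticipate is the passage through vertices $w_i\notin c$, where $T_{w_i,c}$ is merely $\R$ rather than a tree: one must check that the image in $\R$ of the segment $[x,y]$ still faithfully records its length (no "folding"), which follows from Lemma~\ref{lem:equiv} (injectivity of the elementary correspondences in the backward direction) together with the fact that the $(n-2)$-subspaces indexed by distinct points of $T_{w,c}$ are disjoint; carefully justifying that the chain can be routed along $\gamma$, and that no length is lost at these intermediate $\R$-steps, is the technical heart of the argument.
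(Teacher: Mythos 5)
Your plan correctly identifies the overall strategy --- track distances step by step along the geodesic in $T_0$, exploiting the grid-preserving orthogonal gluings --- and this matches the paper's proof in outline. But there is a genuine gap exactly where you flag ``the technical heart.'' Your elementary step rests on the claim that $r_w$ restricted to a boundary component of $X_w$ is a local isometry onto the corresponding boundary line. This is not justified and is not available from the construction: all that is asserted is that $r_w\colon X_w\to T_w$ is a $2\delta$-Lipschitz retraction equivariant under $G'_\theta$. It is not an isometry on boundary geodesics; indeed those geodesics are disjoint from the target tree $T_w$, so there is no ``boundary line of the $\theta$-tree'' for $r_w$ to be an isometry onto, and a generic equivariant retraction will distort, and may even collapse, arcs of the boundary.

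The paper replaces this with a different, and indispensable, ingredient that your plan never invokes: equivariance. It first reduces to $x,y\in r_{u,c}(I_0)$ for a single grid interval $I_0\subset\partial X_u$. Along the geodesic $u=v_0,\ldots,v_l=v$ the chain of orthogonal gluings carries $I_0$ through matching grid segments on the intermediate $\R$-factors to a grid interval $I_{l-1}\subset\partial X_v$; this grid-respecting identification extends to an isometry of the ambient $\theta$-trees, and the $G'_\theta$-equivariance of $r_u$ and $r_v$ then conjugates $r_u$ to $r_v$, giving $|xy|=|x'y'|$ without any local-isometry property of the retraction. Your ``no folding'' worry at the intermediate $\R$-steps is also handled by this single-grid-interval reduction, which your sketch does not carry out. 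So the proposal has the right shape but is missing the two ideas --- reduction to a single grid interval and use of equivariance --- that actually close the argument.
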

\begin{proof}
In fact, let
$u=v_0,\ldots,v_l=v$ 
be consecutive vertices of the geodesic between vertices
$u$ 
and 
$v$ 
in the tree 
$T_0$.
Note that it suffices to consider the case when the vertices  
$v_1,\ldots,v_{l-1}$ 
are not in the class 
$c$. 
For each 
$0\leq i\leq l-1$ 
denote the common hyperplane of the blocks
$\M_{v_i}$ 
and 
$\M_{v_{i+1}}$ 
by 
$\sigma_i$. 
Let 
$\d X_u$ 
be the common boundary component of the 
$\te$-tree 
$X_u$ 
corresponding to the hyperplane 
$\sigma_0$. 
Well as, let  
$\d X_v$ 
be the common boundary component of the
$\te$-tree 
$X_v$ 
corresponding to the hyperplane 
$\sigma_{l-1}$.
By the construction of the metric on the universal cover
$\M$ 
for each interval 
$I_0$ 
of the grid on the boundary component 
$\d X_u$, 
there are segments
$I_1,\ldots,I_{l-2}$
of the grids on the on the corresponding 
$\R$-factors
and the segment 
$I_{l-1}$ 
of the grid on the boundary component
$\d X_v$ 
such that for each 
$1 \leq i \leq l-1$ 
we have 
$p^j_{v_i}(I_{i-1})=p^j_{v_i}(I_{i})$, 
where 
$j=\s_{uv_i}(x_0)$. 
Note that it is sufficient to prove the Lemma for arbitrary points
$x,y\in r_{u,c}(I_0)\subset T_{u,c}$.
But for such points we have
$x',y'\in r_{v,c}(I_{l-1})$, 
hence, by the equivariance of retractions
$r_u$ 
and 
$r_v$ 
the required equality 
$|xy| = |x'y'|$ 
is satisfied.
\end{proof}

Fix 
$c\in\C$.
Define the space 
$T_c$ 
as a factor
$\{\bigsqcup T_{v,c}\mid v\in V(T_0)\}/\sim_c$. 
From Lemma~\ref{lem:isom}, it follows that the resulting space is
a metric tree. 
Moreover, for each vertex
$v\in c$ 
the natural embedding 
$proj_{v,c}\colon T_{v,c}\to T_c$ 
is isometric. 

Thus, for each 
$c\in \C$
we constructed a tree 
$T_c$, 
which is naturally divided into blocks
$T_{v,c}$.

\subsection{Maps to trees}
\label{subsect:trmaps}

The remainder of this paper, we consider the product of 
$|\C|+1=n$ of constructed trees 
$T_0\times \prod\limits_{c\in \C} T_c$ 
as a metric space with the sum metric.
It means that the distance between two points
$x,y\in T_0\times \prod\limits_{c\in \C} T_c$
defined as the sum of the distances between their projections in the trees
$T_0$, $T_c$,  
for each 
$c\in \C$.
$$|xy|=|x_0y_0|_{T_0}+\sum\limits_{c\in\C}|x_cy_c|_{T_c}.$$

We define a map 
$\phi_c \colon \M \to T_c$ 
by the formula:
$$\phi_c(x):=(proj_{v,c}\circ r_{v,c})(x),$$ 
where 
$x\in \M_v$
and 
$proj_{v,c}$
is the natural embedding of the tree 
$T_ {v, c}$ 
in the tree 
$T_c$.
It follows from the definition of the maps 
$r_{v,c}$ 
and from the construction of the tree 
$T_c$ 
that the map 
$\phi_c$
is well defined on the intersection 
$\M_u\cap \M_v$ 
of each pair of adjacent blocks 
$\M_u$ 
and 
$\M_v$.  

From the definition of the map 
$r_{v,c}$,  
we have that 
$\phi_c$ 
is 
$2\delta$-Lipschitz.
To prove this, it suffices to show that
$$|\phi_c(x)\phi_c(y)|\leq 2\delta|xy|,$$
where 
$x$ 
and 
$y$ 
belong to the same block 
$\M_v$. 
This fact follows from the definition of 
$\phi_c$.

Define a map 
$\phi_0 \colon \M \to T_0$ 
as follows.
If 
$x\in \M_v$ 
and for any other vertex 
$u\in\V(T_0)$, 
$x\notin \M_u$  
set $\phi_0(x)=v$.
Otherwise if 
$x\in \M_u\cap \M_v$ 
and 
$r(u)<r(v)$  
set 
$\phi_0(x)=u$. 
It is clear that 
$|\phi_0(x)\phi_0(y)|\leq |xy|+1$.

Define a map 
$\phi\colon \M\to T_0\times \prod\limits_{c\in \C} T_c$  
by the equality 
$\phi:=\phi_0\times \prod\limits_{c\in \C}\phi_{c}$. 

Then  
$$|\phi(x)\phi(y)|=|\phi_0(x)\phi_0(y)|+\sum\limits_{c\in\C}|\phi_{c}(x)\phi_{c}(y)|\leq (2\delta(n-1)+1)|xy|+1.$$

\section{Special curves}
\label{sect:curves}

For a curve 
$\gamma$ 
in a metric space 
$X$ 
by 
$|\gamma|$ 
denote its length. 
For further proof we need the following lemma. 

\begin{lemma}\label{lem:curves}
Let $x,y\in \M$ be a pair of points.  
Then there exists a curve 
$\gamma\subset\M$
between them such that 
$|\gamma|\leq (2\delta+1)|\phi(x)\phi(y)|+2\delta$.
\end{lemma}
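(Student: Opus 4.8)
The plan is to build the curve $\gamma$ by working along the geodesic in $T_0$ between $\phi_0(x)$ and $\phi_0(y)$, lifting it block by block into $\M$. Write $\phi_0(x)=v$, $\phi_0(y)=u$, and let $v=v_0,v_1,\dots,v_l=u$ be the consecutive vertices of the $T_0$-geodesic joining them; since $\phi_0$ changes by at most $1$ under a unit move and $T_0$ has unit edges, $l\le |\phi_0(x)\phi_0(y)|_{T_0}+O(1)$, which will feed into the final estimate. Inside each block $\M_{v_i}\cong X_{v_i}\times\R^{n-2}$ I will construct a segment of $\gamma$ joining the entry point $a_i$ (on the hyperplane $\sigma_{i-1}$, or $a_0:=x$) to the exit point $a_{i+1}$ (on $\sigma_i$, or $a_{l+1}:=y$), and concatenate.

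The key point is controlling the length of each block-segment by the displacement of $\phi$ restricted to that block. Recall $\M_{v}$ carries the product metric, and on $\M_v$ the map $\phi$ records: the class-$c$ coordinate $r_{v,c}$ for $c\ni v$, which is $r_v\circ p^1_v$ (the $2\delta$-Lipschitz retraction onto the tree $T_v$ inside $X_v$), and for every other class the coordinate $p^k_v$ of one of the Euclidean factors. So the only geometry of $\M_v$ not directly seen by $\phi$ is the direction along a $\theta$-tree hexagon transverse to $T_v$, and that direction has bounded diameter $\delta$. Concretely: given a target point $a_{i+1}$ on the exit hyperplane with prescribed $\phi_c$-values, I move within $\M_{v_i}$ first along the Euclidean factors and along $T_{v_i}$ to match all the $\phi_c$-coordinates — this costs at most $\sum_c|\phi_c(a_i)\phi_c(a_{i+1})|$ up to the $2\delta$ distortion of $r_v$ — and then a final ``short'' move of length at most $\delta$ (bounded by the hexagon diameter) inside a single fiber of $r_v$ to land exactly on $a_{i+1}$. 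Summing over the $l+1$ blocks and using that the $\phi_c$-displacements over consecutive blocks telescope (because $\phi_c$ is globally defined and continuous across hyperplanes), the total becomes at most $2\delta\sum_c|\phi_c(x)\phi_c(y)|_{T_c}$ plus $\delta\cdot(l+1)$; absorbing $l$ into $|\phi_0(x)\phi_0(y)|_{T_0}$ and collecting constants gives $|\gamma|\le(2\delta+1)|\phi(x)\phi(y)|+2\delta$ with the sum metric on $T_0\times\prod_c T_c$.

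The main obstacle I anticipate is the bookkeeping of exactly which coordinate is ``transverse'' in each block and verifying the telescoping is clean — i.e., that when $\gamma$ crosses the hyperplane $\sigma_i$ the $\phi_c$-coordinates of the entry and exit points on the two sides agree, so no length is double-counted and the per-block $\delta$-overhead really is only one term per block rather than accumulating with the number of coordinates. This is precisely where the orthogonality of the gluings and the compatibility of grids (Lemma~\ref{lem:equiv} and the construction of $\sim_c$) are used: they guarantee that $r_{u,c}$ and $r_{v,c}$ induce the same value on $\M_u\cap\M_v$, so crossing a hyperplane costs nothing in any $\phi_c$. Once that is pinned down, the length accounting is routine. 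A secondary care point is the class $c$ with $v_i\in c$: there $r_{v_i,c}=r_{v_i}\circ p^1_{v_i}$ is only a retraction, not an isometry, but since it is $2\delta$-Lipschitz and we are free to choose the $T_{v_i}$-component of our path to realize the target value, moving the needed distance in $X_{v_i}$ is at most $2\delta$ times the distance moved in $T_c$, which is already built into the stated bound.
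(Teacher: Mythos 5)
Your overall strategy is the same as the paper's: lift the $T_0$-geodesic block by block, bound each block's contribution by the displacement of $\phi$ plus a $\delta$-scale overhead coming from the hexagon diameter, and absorb the per-block overhead into the $\phi_0$-coordinate (each crossing costs a unit in $T_0$). The paper packages this as an induction on $|uv|_{T_0}$, peeling off one block at a time rather than building the whole concatenation at once, but that is a presentational difference.

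The genuine gap is in your justification of telescoping. You write that "the $\phi_c$-displacements over consecutive blocks telescope (because $\phi_c$ is globally defined and continuous across hyperplanes)." Well-definedness of $\phi_c$ across a hyperplane only ensures the entry and exit values match there; it does not give $\sum_i|\phi_c(a_i)\phi_c(a_{i+1})|=|\phi_c(x)\phi_c(y)|$. In a tree one always has $\sum_i|\phi_c(a_i)\phi_c(a_{i+1})|\ge|\phi_c(x)\phi_c(y)|$, with strict inequality unless every $\phi_c(a_i)$ lies on the geodesic segment $[\phi_c(x),\phi_c(y)]\subset T_c$. So the "prescribed $\phi_c$-values" of $a_{i+1}$ cannot be left implicit: they must be chosen on the $T_c$-geodesic, and one must check such an $a_{i+1}$ actually exists on the hyperplane. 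This is exactly what the paper's choice of $x_2$ does: it takes $z$ to be the point where the projection $\eta_c$ of the geodesic $[\phi(x),\phi(y)]$ exits $T_{v,c}$, finds $x_2\in\M_v\cap\M_{v'}$ with $\phi_c(x_2)=z$, $\phi_{c'}(x_2)=\phi_{c'}(x)$ for $c'\ne c$, and $x_2\in\pi_x(X_v)$ (so $|x_1x_2|<\delta$), and then verifies $|\phi(x)\phi(x_2)|+|\phi(x_2)\phi(y)|=|\phi(x)\phi(y)|$ coordinate by coordinate, including the $\phi_0$-coordinate via $r(v)>r(v')$. That existence-and-optimality verification is where the real work of the lemma lies; Lemma~\ref{lem:equiv} and the grid compatibility handle well-definedness only, not telescoping. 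A smaller bookkeeping point: the per-block additive overhead is $2\delta$, not $\delta$ (a $\delta$-correction at each end from the retraction $r_v$), and the $2\delta$ factor you put in front of $\sum_c|\phi_c(x)\phi_c(y)|$ is unnecessary, since $T_v$ is isometrically embedded in $X_v$, so realizing a $T_c$-displacement by moving along $T_v$ costs exactly that displacement.
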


\begin{proof}
Consider vertices 
$u,v\in \V(T_0)$  
such that 
$x\in \M_v$, $y\in \M_u$. 
We prove the lemma by induction on the length of the path 
$|uv|_{T_0}$.

\texttt{Base:} 
There exists a vertex 
$v\in\V(T_0)$ 
that 
$x,y\in \M_v$.
In this case, the geodesic 
$xy$ 
does the job. Indeed,  
$$|xy|=\sqrt{|p^v_1(x)p^v_1(y)|^2+\ldots+|p^v_{n-1}(x)p^v_{n-1}(y)|^2},$$ 
that does not exceed 
$$2\delta+\sum\limits_{c\in\C}|\phi_{c}(x)\phi_{c}(y)|\leq (2\delta+1) |\phi(x)\phi(y)|+2\delta.$$

\texttt{Inductive step:}
Fix vertices 
$u, v\in\V(T_0)$. 
Let 
$\eta_0$ 
be a geodesic between 
$u$ 
and 
$v$ 
in the tree 
$T_0$.
Choose vertices 
$u', v'\in \eta_0$ 
that 
vertices 
$u$ 
and 
$u'$ 
as well as 
vertices 
$v$ 
and 
$v'$
are 
adjacent. 
Note that 
either 
$r(u)>r(u')$  
or 
$r(v)>r(v')$. 
Without loss of generality, assume that 
$r(v)>r(v')$.
We can also assume that the point 
$x$ 
does not belong to the block 
$\M_{v'}$.
In this case, 
$\phi_0(x)=v$ 
and 
and for any point 
$x'\in \M_{v}\cap\M_{v'}$ 
we have 
$\phi_0(x')=v'$. 
It follows that 
$|\phi_0(x)\phi_0(x')|=1$.

Assume the vertex 
$v$ 
belongs to the class 
$c\in \C$. 
Consider a geodesic 
$\eta$ 
between 
$\phi(x)$ 
and 
$\phi(y)$. 
Denote its projection to the tree 
$T_c$ 
by
$\eta_c$. 
Note that
the curve 
$\eta_c$ 
is a geodesic 
between 
$\phi_c(x)$ 
and 
$\phi_c(y)$.
Divide the curve 
$\eta_c$ 
into two parts
$\eta^1_c=\eta_c\cap T_{v,c}$ 
and 
$\eta^2_c=\eta_c\setminus \eta^1_c$.
Let 
$z$ 
be the end of the curve 
$\eta^1_c$  
different from  
$\phi_c(x)$.

Recall that 
$\pi_x \colon X_v \to \M_v$ 
is a horizontal embedding
such that the image contains the point 
$x$.
Further, we assume that the tree
$T_{v,c}$ 
is naturally embedded in the 
$\te$-tree 
$X_v$.  
Define a map 
$\pi'_x$ 
as the composition of such embedding and the map 
$\pi_x$. 
Denote the image of the curve 
$\eta^1_c$ 
under the map
$\pi'_x$ 
by 
$\gamma_c$. 
We set 
$x_0:=\pi'_x(\phi_c(x))$ 
and 
$x_1:=\pi'_x(z)$. 
Then 
$|xx_0| <\delta$ 
and there exists a point
$x_2\in \M_v\cap\M_{v'}$ 
such that 
$x_2\in \pi_x(X_v)$, $\phi_c(x_2)=z$ 
and for any 
$c\neq c'\in \C$ 
we have 
$\phi_{c'}(x_2)=\phi_{c'}(x)$.
Note that 
$|x_1x_2|<\delta$ 
and   
$$|\phi_{c'}(x)\phi_{c'}(x_2)|+|\phi_{c'}(x_2)\phi_{c'}(y)|=|\phi_{c'}(x)\phi_{c'}(y)|.$$ 
On the other hand, since 
$\phi_c(x_2)=z$,  
$$|\phi_{c}(x)\phi_{c}(x_2)|+|\phi_{c}(x_2)\phi_{c}(y)|
=|\phi_{c}(x)z|+|z\phi_{c}(y)|=
|\phi_c(x)\phi_c(y)|.$$
Finally, we note that the point 
$\phi_0(x_2)$ 
belongs to the geodesic between 
$\phi_0(x)$ 
and 
$\phi_0(y)$ 
in the tree 
$T_0$.  
It follows that 
$$|\phi_0(x)\phi_0(x_2)|+|\phi_0(x_2)\phi_0(y)|=|\phi_0(x)\phi_0(y)|.$$
It means that  
$$|\phi(x)\phi(x_2)|+|\phi(x_2)\phi(y)|=|\phi(x)\phi(y)|,$$ 
and 
$x_2\in \M_{v'}$.
By induction, for the points 
$x_2$ 
and 
$y$,  
there exists a curve 
$\gamma'$ between  
$x_2$ and 
$y$ 
with 
$$|\gamma'|\leq (2\delta+1) |\phi(x_2)\phi(y)|+2\delta.$$
Consider the curve 
$\gamma$ 
which is the union of the geodesic 
$xx_0$, 
the curve 
$\gamma_c$, 
the geodesic 
$x_1x_2$ 
and the curve 
$\gamma'$.   
We have  
$$|\gamma|=|xx_0|+|\gamma_c|+|x_1x_2|+|\gamma'|\leq
2\delta+|\gamma_c|+|\gamma'|\leq 2\delta+|\phi_c(x)\phi_c(x_2)|+|\gamma'|,$$ 
which by induction does not exceed 
$$2\delta+|\phi(x)\phi(x_2)|+(2\delta+1)|\phi(x_2)\phi(y)|+2\delta.$$ 
We have shown that 
$|\phi_0(x)\phi_0(x_2)|=1$, 
therefore, 
$$2\delta+|\phi(x)\phi(x_2)|=2\delta|\phi_0(x)\phi_0(x_2)|+|\phi(x)\phi(x_2)|\leq (2\delta+1)|\phi(x)\phi(x_2)|.$$
So  
$$|\gamma|\leq 2\delta+|\phi(x)\phi(x_2)|+(2\delta+1)|\phi(x_2)\phi(y)|+2\delta\leq$$
$$\leq (2\delta+1)(|\phi(x)\phi(x_2)|+|\phi(x_2)\phi(y)|)+2\delta,$$ 
hence  
$|\gamma|\leq (2\delta+1)(|\phi(x)\phi(y)|)+2\delta$.
\end{proof}
\begin{corollary}
For any points 
$x, y\in \M$ 
the inequality 
$|xy|\leq (2\delta+1)|\phi(x)\phi(y)|+2\delta$ 
holds.
\end{corollary}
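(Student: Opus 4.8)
The plan is to deduce the corollary directly from Lemma~\ref{lem:curves}. The first observation I would record is that the special metric on $\M$ constructed in Section~\ref{subsubsect:ucmetr} is an intrinsic (length) metric, in fact geodesic: it is assembled by isometrically gluing blocks, each isometric to $\Hh\times\R^{n-2}$, along boundary hyperplanes, so by the Reshetnyak gluing theorem the result is a nonpositively curved geodesic space. Consequently, for any $x,y\in\M$ the distance $|xy|$ equals the infimum of the lengths of curves joining $x$ to $y$; in particular $|xy|\le|\gamma|$ for every such curve $\gamma$.

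Next I would simply apply Lemma~\ref{lem:curves} to the given pair $x,y\in\M$: it supplies a curve $\gamma\subset\M$ from $x$ to $y$ with $|\gamma|\le(2\delta+1)|\phi(x)\phi(y)|+2\delta$. Chaining the two inequalities gives $|xy|\le|\gamma|\le(2\delta+1)|\phi(x)\phi(y)|+2\delta$, which is precisely the assertion. Since Lemma~\ref{lem:curves} is stated with no restriction on $x$ and $y$, no case analysis is needed.

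There is essentially no obstacle here: all the substantive work lies in the inductive construction of $\gamma$ carried out in the proof of Lemma~\ref{lem:curves}. The one point worth flagging is that one really does need $\M$ to be a length space --- not merely bi-Lipschitz to one --- in order to pass from the length bound on $\gamma$ to the distance bound on $|xy|$; this is ensured by the block-by-block assembly of the special metric. Finally, combining this corollary with the estimate $|\phi(x)\phi(y)|\le(2\delta(n-1)+1)|xy|+1$ from the end of Section~\ref{subsect:trmaps} shows that $\phi\colon\M\to T_0\times\prod_{c\in\C}T_c$ is a quasi-isometric embedding into a product of $n$ trees, which by the Milnor--\v{S}varc Lemma is exactly what Theorem~\ref{main:thm} requires.
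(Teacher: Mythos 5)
Your proof is correct and is the intended (essentially the only) deduction: since the special metric on $\M$ is a geodesic metric, $|xy|$ is bounded by the length of the curve supplied by Lemma~\ref{lem:curves}, and the stated inequality follows immediately. The paper gives no separate argument for the corollary because this is exactly the one-line observation it relies on; your remark that $\M$ must be a length space (not merely bi-Lipschitz to one) is the right point to flag.
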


Applying the above inequalities, we obtain 
$$|xy|/(2\delta+1)-2\delta/(2\delta+1)\leq |\phi(x)\phi(y)|\leq (2\delta(n-1)+1)|xy|+1,$$
therefore,  
$$1/C|xy|-1\leq |\phi(x)\phi(y)| \leq C|xy|+1,$$
where 
$C=\max\{2\delta+1, 2\delta(n-1)+1\}$.
This completes the proof of Theorem~\ref{main:thm}.

\section{Asymptotic dimensions}
\label{sect:asdims}

\subsection{Definitions}
\label{subsec:asdimdef}

Recall some basic definitions
and notations.
Let 
$X$ 
be a metric space.
We denote by 
$|xy|$
the distance between 
$x,y\in X$
and
$d(U,V):=\inf\{|uv|\mid u\in U,v\in V\}$
is the distance between 
$U,V\subset X$.

We say that a family 
$\EuScript U$ 
of subsets of
$X$ 
is a \textit{covering} if for each
point 
$x\in X$ 
there is a subset 
$U\in \EuScript U$
such that
$x\in U$.
A family 
$\EuScript U$ of sets
is \textit{disjoint} if each two sets
$U,V\in \Uu$ 
are disjoint. The union
$\EuScript U=\cup \{\EuScript U^{\alpha}\mid \alpha\in \EuScript A\}$
of disjoint families
$U^{\alpha}$
is said to be
$n$-{\em colored},
where
$n=|\EuScript A|$
is the cardinality of
$\EuScript A$.

Also, recall that a family 
$\EuScript U$ 
is
\textit{$D$-bounded}, if the diameter of every
$U\in \EuScript U$ 
does not exceed 
$D$, $\diam U\leq D$.
A $n$-colored family of sets 
$\EuScript U$ 
is
\textit{$r$-disjoint}, if for every color
$\alpha\in \EuScript A$ 
and each two sets
$U,V\in \EuScript U^{\alpha}$ 
we have 
$d(U,V)\geq r$.

The linearly-controlled asymptotic
dimension is a version of the Gromov's
asymptotic dimension, $\asdim$.

\begin{definition}(Gromov~\cite{Gro})
The \textit{asymptotic dimension}
of a metric space $X$, $\asdim X$, is the least
integer number $n$ such that for each sufficiently large
real $R$ there exists a $(n+1)$-colored,
$R$-disjoint, $D$-bounded covering of the space $X$,
where the number $D>0$ is independent of $R$.
\end{definition}

\begin{definition}(Roe \cite{Roe})
The \textit{linearly-controlled asymptotic dimension}
of a metric space $X$, $\lasdim X$, is the least
integer number $n$ such that for each sufficiently large
real $R$ there exists an $(n+1)$-colored,
$R$-disjoint, $CR$-bounded covering of the space $X$,
where the number $C>0$ is independent of $R$.
\end{definition}

It follows from the definition that   
$\asdim X\leq \lasdim X$ 
for any metric space 
$X$.

In the next section we show that the fundamental group of orthogonal graph-manifold satisfies  
$n\leq \asdim \pi_1(M)\leq \lasdim\pi_1(M)\leq n$.

\subsection{Upper and lower bounds}
\label{subsec:asdimestim}

Recall some properties of the above dimensions.

Let $X$ and $Y$ be metric spaces.
If 
$X$ 
is quasi-isometric to 
$Y$ 
then 
$\asdim X=\asdim Y$ 
and 
$\lasdim X=\lasdim Y$. 
If 
$X\subset Y$  
then 
$\lasdim X\leq \lasdim Y$. 
Also, 
$\lasdim X\times Y\leq \lasdim X+\lasdim Y$. 
Let 
$T$ 
be a metric tree, then 
$\lasdim T\leq 1$. 
It follows from the above properties, that 
$\asdim \pi_1(M)=\asdim \M \leq \lasdim \M\leq \lasdim (T_0\times \prod\limits_{c\in \C} T_{c})\leq n$.
On the other hand, the space 
$\M$ 
is an Hadamard manifold, and hence, see~\cite[Theorem~10.1.1]{BuySchr},
$\asdim \M\geq n$.

\end{document}